\newtheorem{theo}{Theorem}[section]
\newtheorem{lemme}[theo]{Lemma}
\newcommand\egaldef{\stackrel{\mbox{\upshape\tiny def}}{=}}
\newcommand{\PP}{\mathrm{I\! P}}
\newcommand{\EE}{\mathrm{I\! E}}
\newcommand{\RR}{\mathrm{I\! R}}
\newcommand{\ZZ}{\mathsf{Z}}
\newcommand{\N}{^{\scriptscriptstyle (N)}}
\DeclareMathOperator{\dilog}{dilog}
\newcommand{\1}{\leavevmode\hbox{\rm \small1\kern-0.35em\normalsize1}}
\newcommand{\ind}[1]{\1_{\{#1\}}}
\newcommand{\EX}{{\bar\alpha}}
\begin{document}

\title{A nonlinear integral operator encountered in the bandwidth
       sharing of a star-shaped network\thanks{This work has been
       partly supported by a grant from the \emph{Centre National d'\'Etudes
       en T\'el\'ecommunications}.}} 
\author{Guy Fayolle\thanks{INRIA -- Domaine de Voluceau BP 105 --
Rocquencourt 78153 Le Chesnay cedex\protect\\
E-mail: \texttt{Guy.Fayolle@inria.fr}, 
\texttt{Jean-Marc.Lasgouttes@inria.fr}}
\and Jean-Marc Lasgouttes\footnotemark[1]}
\date{}
\maketitle

\begin{abstract}
We consider a symmetrical star-shaped network, in which bandwidth is
shared among the active connections according to the ``min'' policy.
Starting from a \emph{chaos propagation} hypothesis, valid when the
system is large enough, one can write equilibrium equations for an
arbitrary link of the network. This paper describes an approach based
on functional analysis of nonlinear integral operators, which allows
to characterize quantitatively the behaviour of the network under
heavy load conditions.

\end{abstract}

\section{Model description}\label{sec.presentation}

\begin{figure}
\begin{center}
\includegraphics[width=0.5\textwidth,keepaspectratio]{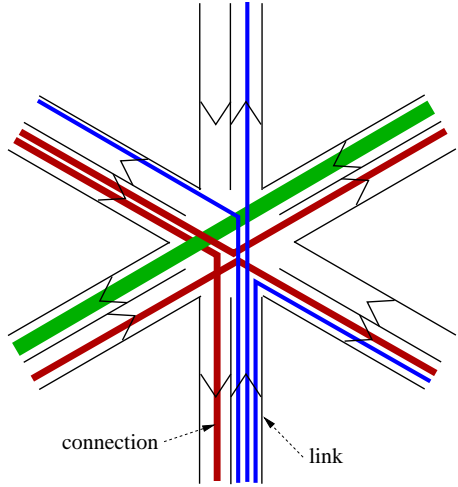}
\end{center}
\caption{The star-shaped network}
\label{fig.etoile}
\end{figure}
Consider a network comprising $N$ links, where several data sources
establish connections along routes going through these links. The main
concern here is about the policies that can be used to share the
bandwidth of the links between active connections, and the effect
of these policies on the dynamics of the network.

In this paper, the network is star-shaped (see
Figure~\ref{fig.etoile}) and all routes are of length $2$, which is a
reasonable model for a router. Each star branch contains two links
(``in'' and ``out'') and each route is isomorphic to a pair of
links.

Let $r=(i,j)$ denote a route on links $i$ and $j$, and $\mathcal{R}$
be the set of all possible routes (with cardinal $N^2/2$). Connections
are created on $r$ according to a Poisson process with intensity
$2\lambda/N$, so that the total arrival intensity on each link is
$\lambda$. A connection lasts until it has transmitted over the
network its data, the volume of which follows an exponential law with
mean $v$. Each link $i,\,1\leq i\leq N$, has a bandwidth equal to $1$
and its load is $\rho\egaldef\lambda v$.

The state of the system at time $t\in\RR$ is given by the number of
active connections on each route $\bigl(c\N_r(t),\,r\in
\mathcal{R}\bigr) \egaldef X\N(t)$. The vector $X\N(t)$ is in general
Markovian and
\[
 X\N_i(t)\egaldef \sum_{r\ni i}c\N_r(t)
\]
is the total number of active connections on link $i,\,\forall 1\leq
i\leq N$.

It is now necessary to describe how bandwidth allocation is achieved.
The ``max-min fairness'' policy, popular in telecommunication models,
being too difficult to be studied rigorously, this article focuses on
the ``min'' policy, proposed by L.~Massoullié and
J.~Roberts~\cite{MasRob:1}, in which a connection on $(i,j)$ gets
bandwidth
\begin{equation}\label{eq.debit}
\min\Bigl(\frac{1}{X\N_i(t)},\frac{1}{X\N_j(t)}\Bigr).
\end{equation}

This allocation clearly satisfies the capacity constraints of the
system, and can be shown to be sub-optimal with respect to max-min
fairness. Its invariant measure equations are however too complicated
to be solved explicitly. An efficient method in such situations is to
study the model in so-called \emph{thermodynamical limit}, using
\emph{mean field} analysis.

In order to study the stationary behaviour of the network as
$N\to\infty$, the idea is to assume the conditions of \emph{chaos
propagation}, under which any finite number of links tend to become
mutually independent. In this paper, this hypothesis will be
considered as a heuristic, to be proved in further studies. Some
rigorous studies of this type can be found in Vvedenskaya \emph{et
al.}~\cite{VveDobKar:1} and Delcoigne et Fayolle~\cite{DelFay:2}.

 From now on, $\rho<1$ and the system is assumed to be in stationary
state $X=(c_r,r\in\mathcal{R})$. For all $k\geq0$, the empirical
measure of the number of links with $k$ connections is
\[
 \alpha\N_k\egaldef\frac{1}{N}\sum_{1\leq i\leq N}\ind{X\N_i = k}.
\]

Symmetry considerations imply that, for all $i\leq N$,
$\PP(X\N_i=k)=\EE\alpha\N_k$. Besides, from the chaos propagation
hypothesis, a law of large numbers is assumed to hold for $\alpha\N_k$:
\[
 \alpha_k \egaldef \lim_{N\to\infty}\alpha\N_k 
 = \lim_{N\to\infty}\PP(X\N_i=k) \egaldef \PP(X=k).  
\]

The $\alpha\N_k$'s, traditionally named \emph{mean field}, drives the
dynamics of the system. The following notation will also prove useful:
\[
  \EX\N \egaldef \EE X\N_i = \sum_{k>0}k\alpha\N_k,\qquad
  \EX   \egaldef \sum_{k>0}k\alpha_k.
\]

A heuristic computation (detailed in~\cite{DenFayForLas:1}), yields
the following equations:
\begin{eqnarray}
u_k &=&\sum_{\ell>0}(k\wedge\ell)\alpha_l , \nonumber\\
\alpha_{k+1} u_{k+1} 
  &=& \rho \EX\alpha_k, \qquad \forall k\geq0.\label{invariante_EQ}
\end{eqnarray}

While (\ref{invariante_EQ}) resembles a ``birth and death process''
equation, it is in fact highly non-linear, due to the form of $u_k$
and of $\EX$. 

The purpose of this paper is to show how the asymptotic behaviour of
the system (as $\rho\to1$) can be derived from the analytical study of the
generating functions built from~(\ref{invariante_EQ}). This work is a part
of the wider study~\cite{DenFayForLas:1}, which also gives ergodicity
conditions for any topology under the min and max-min policies, shows
how equations like~(\ref{invariante_EQ}) are derived (also in the case
where routes are longer than $2$) and presents comprehensive numerical
results.

The main byproduct of Theorem~\ref{theo-anal7}, is the following
asymptotic expansions, valid in a neighborhood of $\rho=1$.
\begin{eqnarray*}
 \EX&\approx&\frac{1}{(1-\rho)^2A},\\
 \lim_{k\to\infty}\rho^{-k}\alpha_k 
       &\approx& (1-\rho)B\exp\Bigl[\frac{1}{(1-\rho)A}\Bigr],
\end{eqnarray*}
where $A$ and $B$ are non-negative constants. Moreover, if $c(z,1)$
and $v(z,1)$ are the solutions of the system of differential
equations~(\ref{eq-anal14}), then $A$ can be written as follows:
\[
 A=\int_0^\infty c(z,1)dz=\lim_{z\to\infty}zv'(z,1)\approx 1.30.
\]

Since this system is numerically highly unstable, it has proven
difficult (with the ``Livermore stiff ODE'' solver from MAPLE) to get
a better estimate for $A$.

\section{An integral equation for the generating function}

Let $\mathcal{C}(r)$ (resp.\ $\mathcal{D}(r)$), be the circle (resp.\
the open disk) of radius $r$ in the complex plane.

Let $\alpha : z\rightarrow\alpha(z)$ be the generating function,
\emph{a priori} defined for $z$ in the closed unit disk
\[ 
  \alpha(z) \egaldef \sum_{k\geq 0}\alpha_k z^k .
\] 

Denoting $\alpha'$ the derivative of $\alpha$,~(\ref{invariante_EQ})
can be rewritten as
\begin{equation}\label{eq-anal0}
\alpha_{k+1} u_{k+1} = \rho \alpha'(1)\alpha_k, \quad \forall k\geq 0.
\end{equation}

\begin{lemme}\label{lem-anal1} \mbox{ }
\begin{itemize}
\item[(a)] If (\ref{eq-anal0}) has a probabilistic solution, then,
necessarily $\rho<1$, and
\[
 \lim_{k\rightarrow\infty}\alpha_k\rho^{-k} = K(\rho), 
\]
where $K(\rho)$ is a positive constant, bounded $\forall\rho<1$.
\item[(b)] The function $\alpha$ satisfies the nonlinear integral equation
\begin{equation} \label{eq-anal1}
\alpha '(1) (1-\rho z)\alpha(z) =
\dfrac{1}{2i\pi}\int_{\mathcal{C}(r)} \alpha(\omega)\,\alpha
\Bigl(\dfrac{z}{\omega}\Bigr)\dfrac{d\omega}{(1-\omega)^2} \, , 
\end{equation}
where $|z|<\rho^{-1}$ et $r$ is an arbitrary positive number, with 
$1<r<\rho^{-1}$.
\end{itemize}
\end{lemme}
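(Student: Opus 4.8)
The plan is to treat the two items separately. For (a), I would first note that $(u_k)$ is nondecreasing with increments $u_{k+1}-u_k=\sum_{\ell>k}\alpha_\ell$, so that $u_k\uparrow\sum_{\ell>0}\ell\alpha_\ell=\alpha'(1)=\EX$ by monotone convergence (finiteness of $\EX$ being implicit in the form of~(\ref{eq-anal0})). The recursion forces $\alpha_0>0$ --- otherwise $u_1=1$ gives $\alpha_1=0$ and then inductively every $\alpha_k=0$ --- so it may be rewritten as $\alpha_{k+1}/\alpha_k=\rho\,\alpha'(1)/u_{k+1}$, a quantity that stays $\geq\rho$ and decreases to $\rho$. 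Were $\rho\geq1$, this ratio would be $\geq1$ for all $k$, making $(\alpha_k)$ nondecreasing and hence not summable; therefore $\rho<1$. Once $\rho<1$ is known, the ratio limit $\rho$ yields geometric decay $\alpha_k=O\bigl((\rho')^k\bigr)$ for any $\rho'\in(\rho,1)$, whence $\alpha'(1)-u_k=\sum_{m\geq k}\sum_{\ell>m}\alpha_\ell=O\bigl((\rho')^k\bigr)$ and in particular $\sum_k\bigl(\alpha'(1)-u_k\bigr)<\infty$. The limit claimed in (a) then comes from the telescoping product
\[
 \alpha_k\rho^{-k}=\alpha_0\prod_{j=1}^k\frac{\alpha'(1)}{u_j}
   =\alpha_0\prod_{j=1}^k\Bigl(1-\frac{\alpha'(1)-u_j}{\alpha'(1)}\Bigr)^{-1},
\]
whose factors exceed $1$ and whose convergence to a finite, positive $K(\rho)\geq\alpha_0$ follows from the summability just obtained.

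For (b), the idea is to pass to generating functions and recognise a Hadamard (coefficientwise) product. Multiplying~(\ref{eq-anal0}) by $z^{k+1}$ and summing over $k\geq0$ gives $\sum_{m\geq1}\alpha_m u_m z^m=\rho\,\alpha'(1)\,z\,\alpha(z)$. Next I would put $(u_m)$ into closed form: from $u_{m+1}-u_m=\sum_{\ell>m}\alpha_\ell$ one gets $\sum_{m\geq0}\bigl(\sum_{\ell>m}\alpha_\ell\bigr)z^m=(1-\alpha(z))/(1-z)$, and a further summation yields
\[
 U(z)\egaldef\sum_{m\geq0}u_m z^m=\frac{z\,(1-\alpha(z))}{(1-z)^2}.
\]
Since $(u_m)$ is bounded (by $\alpha'(1)$) while $\alpha_m$ decays geometrically, the product series $\sum_m u_m\alpha_m z^m$ admits the Hadamard representation $\frac{1}{2i\pi}\int_{\mathcal{C}(r_0)}U(\omega)\,\alpha(z/\omega)\,\frac{d\omega}{\omega}$ on a circle of radius $r_0<1$, legitimate there because $U$ is analytic in $\mathcal{D}(1)$ and $\alpha$ in $\mathcal{D}(\rho^{-1})$ by~(a). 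The goal is to push the contour outward to the stated radius $r\in(1,\rho^{-1})$.

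The heart of the argument --- and the step I expect to be the main obstacle --- is this outward deformation, because $U(\omega)/\omega=(1-\alpha(\omega))/(1-\omega)^2$ carries a singularity at $\omega=1$ lying in the annulus $r_0<|\omega|<r$. Here the probabilistic normalisation is decisive: since $\alpha(1)=1$, the numerator vanishes to first order, the pole is simple, and its residue is exactly $-\alpha'(1)\alpha(z)$ (the a priori dangerous term proportional to $1-\alpha(1)$ drops out). Crossing it therefore gives
\[
 \frac{1}{2i\pi}\int_{\mathcal{C}(r)}\frac{(1-\alpha(\omega))\,\alpha(z/\omega)}{(1-\omega)^2}\,d\omega
   =\rho\,\alpha'(1)\,z\,\alpha(z)-\alpha'(1)\alpha(z).
\]
Finally, on $\mathcal{C}(r)$ with $r>1$ the Laurent expansions of $\alpha(z/\omega)$ and of $(1-\omega)^{-2}$ run through powers $\omega^0,\omega^{-1},\dots$ and $\omega^{-2},\omega^{-3},\dots$ respectively, so their product has no $\omega^{-1}$ term and $\int_{\mathcal{C}(r)}\alpha(z/\omega)(1-\omega)^{-2}d\omega=0$. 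Subtracting this null integral isolates $\frac{1}{2i\pi}\int_{\mathcal{C}(r)}\alpha(\omega)\alpha(z/\omega)(1-\omega)^{-2}d\omega$ and, after rearrangement, produces exactly~(\ref{eq-anal1}). What remains is analyticity bookkeeping --- checking that $\alpha(z/\omega)$ stays inside $\mathcal{D}(\rho^{-1})$ throughout the deformation, which holds for $|z|<\rho^{-1}$ as soon as $r_0>|z|\rho$ --- but this is routine.
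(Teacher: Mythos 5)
Your proof is correct and takes essentially the same route as the paper: part (a) is the paper's own argument (the ratio $\alpha_{k+1}/\alpha_k=\rho\alpha'(1)/u_{k+1}\geq\rho$ forcing $\rho<1$, then the telescoping product $\alpha_k\rho^{-k}=\alpha_0\prod_{j=1}^k\alpha'(1)/u_j$, your $\alpha'(1)-u_j$ being exactly the paper's $D_j$, with convergence reduced to summability of these geometrically small remainders), and part (b) is the same application of Hadamard's multiplication formula. The only difference is that you spell out what the paper compresses into ``leads directly to~(\ref{eq-anal1})'': the explicit choice $U(z)=z(1-\alpha(z))/(1-z)^2$, the contour push from $\mathcal{C}(r_0)$ to $\mathcal{C}(r)$ across the simple pole at $\omega=1$ with residue $-\alpha'(1)\alpha(z)$, and the vanishing of $\frac{1}{2i\pi}\int_{\mathcal{C}(r)}\alpha(z/\omega)(1-\omega)^{-2}d\omega$ for $r>1$ --- all of which is correct bookkeeping.
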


\begin{proof}{}
When~(\ref{invariante_EQ}) has a probabilistic solution, necessarily
$\EX=\lim_{m\rightarrow\infty} u_m$. Therefore, $\forall\epsilon>0$,
there exists a number $M(\epsilon)>0$, such that

\[ 
  \rho\leq\dfrac{\alpha_{m+1}}{\alpha_m} \leq \dfrac{\rho}{1-\epsilon},
  \quad \forall m \geq M(\epsilon),
\] 
which implies that $\rho<1$. Moreover, under the same existence
hypothesis, one can write
\begin{equation}\label{eq-anal2}
\alpha_k\rho^{-k} = \alpha_0 \prod_{l\geq 0}^k
\dfrac{\EX}{\EX - D_{\ell+1}},
\end{equation}
where $D_\ell = \sum_{m\geq \ell}(m-\ell)\alpha_m$.

When $\ell\to\infty$, the convergence of the product
in~(\ref{eq-anal2}) is equivalent to the convergence of the series of
general term $D_\ell$, which holds since
\[  
  D_\ell \leq \alpha_0 \sum_{m\geq \ell}(m-\ell)(\rho +\epsilon)^m , 
    \ \forall \ell \geq M(\epsilon) , 
\] 
so that $\sum_\ell D_\ell$ behaves like
\[
  \sum_{\ell\geq 0}\sum_{n\geq 0} n(\rho +\epsilon)^{\ell+n} =
  \dfrac{\rho+\epsilon}{(1-\rho-\epsilon)^3}. 
\] 

Point (a) of the lemma is proven. Point (b) is an application of (a)
and of an integral representation used by Hadamard, recalled below
(see e.g.~\cite{TIT}).
\begin{itemize}\item[]\itshape%
Let $a$ and $b$ be functions
\[
  a(z) = \sum_{n\geq 0} a_n z^n, \qquad b(z) = \sum_{n\geq 0} b_n z^n ,
\]
analytic in the respective disks $\mathcal{D}(R)$ and
$\mathcal{D}(R')$. Then the function
\[
 c(z)= \sum_n a_n b_n z^n
\] 
has a radius of convergence greater than $RR'$ and has the integral form
\[
 c(z) = \dfrac{1}{2i\pi}\int_{\mathcal{L}} a(\omega)\,
  b\Bigl(\dfrac{z}{\omega}\Bigr)\dfrac{d\omega}{\omega} \, ,
\]
where $\mathcal{L}$ is a closed contour containing the origin, and on
which $|\omega|<R, \Bigl|\dfrac{z}{\omega}\Bigr|<R'$.
\end{itemize}

 From (a), $\alpha$ has a radius of convergence at least equal to
$\rho^{-1}$: this property, used in Hadamard's formula, leads directly
to~(\ref{eq-anal1}). 
\end{proof}

The following Lemma provides a finer description of $\alpha(z)$.

\begin{lemme}\label{lem-anal2} 
The function $\alpha$ is meromorphic and can be written as
\begin{equation}\label{eq-anal3}
\alpha(z) = \sum_{i=1}^{\infty} \dfrac{a_i}{\rho^{-i} - z} \,.
\end{equation}

Moreover, for any sequence of circles $\mathcal{C}(R_n)$, such that
\[
 (1+\epsilon)\rho^{-n}\leq R_n \leq (1-\epsilon)\rho^{-(n+1)}, \quad
 0<\epsilon<\dfrac{1-\rho}{1+\rho}, 
\] 
one has $|\alpha(R_n)| = o(1)$ as $n\rightarrow\infty$.
\end{lemme}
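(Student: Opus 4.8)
The starting point is Lemma~\ref{lem-anal1}: $\alpha$ is analytic in $\mathcal{D}(\rho^{-1})$ and $\alpha_k\rho^{-k}\to K(\rho)>0$. Since $\frac{a_1}{\rho^{-1}-z}$ has Taylor coefficients $a_1\rho^{k+1}$, the asymptotics of $\alpha_k$ force $z=\rho^{-1}$ to be a simple pole with residue governed by $a_1=K(\rho)\rho^{-1}>0$; this is the leading singularity and the base case of the whole analysis. I would rewrite~(\ref{eq-anal1}) as $\alpha(z)=\frac{1}{\alpha'(1)(1-\rho z)}\,I(z)$, where $I(z)=\frac{1}{2i\pi}\int_{\mathcal{C}(r)}\alpha(\omega)\,\alpha\bigl(\tfrac{z}{\omega}\bigr)\frac{d\omega}{(1-\omega)^2}$, and use this representation as a bootstrap: it expresses $\alpha$ on a larger disk in terms of $\alpha$ on a smaller one, which is exactly what is needed to continue the function and, simultaneously, to control its size.

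The plan is a single induction carrying two statements at level $n$: that $\alpha$ is meromorphic on $\mathcal{D}(\rho^{-n})$ with only simple poles at $\rho^{-1},\dots,\rho^{-(n-1)}$, and that $|\alpha|=o(1)$ on circles lying strictly between consecutive poles up to that radius. To push from $n$ to $n+1$, I would study the $\omega$-singularities of the integrand of $I(z)$: the poles of $\alpha(\omega)$ at $\omega=\rho^{-m}$, the double pole of the weight at $\omega=1$, and the poles of $\alpha\bigl(\tfrac{z}{\omega}\bigr)$ at $\omega=z\rho^{i}$. Keeping the contour between the two moving families and letting $z$ grow, the pole $\omega=z\rho^{i}$ migrates outward and collides with the fixed pole $\omega=\rho^{-m}$ exactly when $z=\rho^{-(m+i)}$. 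These pinch points are precisely $z=\rho^{-s}$ with $s\geq 2$, each arising from a collision of two simple poles, hence generically producing a \emph{simple} pole of $\alpha$; summing the residues picked up over all $m+i=s$ yields a convolution recursion for the residues $a_s$. Together with the explicit simple pole of the prefactor $\frac{1}{1-\rho z}$ at $\rho^{-1}$, this shows the poles of $\alpha$ are exactly the simple poles at $\rho^{-s}$, $s\geq1$.

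The decay statement is the engine that closes the induction. On a circle $\mathcal{C}(R_n)$ with $(1+\epsilon)\rho^{-n}\leq R_n\leq(1-\epsilon)\rho^{-(n+1)}$ one has $|1-\rho z|\geq \rho R_n-1\gtrsim\rho^{-(n-1)}$, so the prefactor $\frac{1}{\alpha'(1)|1-\rho z|}$ is of order $\rho^{\,n-1}$ and is the source of smallness. Choosing in $I(z)$ a contour that stays at distance $\gtrsim\epsilon\rho^{-m}$ from every pole (which is possible by the gap hypothesis on $\epsilon$), I would bound $\alpha(\omega)$ and $\alpha\bigl(\tfrac{z}{\omega}\bigr)$ using the level-$n$ inductive bounds and the weight $\frac{1}{(1-\omega)^2}$, obtaining a bound on $|I(z)|$ that grows strictly slower than $\rho^{-(n-1)}$; the product is then $o(1)$, which is the level-$(n+1)$ decay statement. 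Finally, with meromorphy, the simple poles at $\rho^{-i}$, and the $o(1)$ bound on the separating circles in hand, I would integrate $\frac{\alpha(\zeta)}{\zeta-z}$ over $\mathcal{C}(R_n)$: the contour integral tends to $0$ by the decay estimate, leaving the Mittag--Leffler expansion $\alpha(z)=\sum_{i\geq1}\frac{a_i}{\rho^{-i}-z}$ of~(\ref{eq-anal3}), with $a_i=-\mathrm{Res}_{z=\rho^{-i}}\alpha$.

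The main obstacle is the simultaneous induction itself, and specifically the contour bookkeeping inside $I(z)$: one must deform (or re-choose) the integration contour so that it separates the two families of poles at every stage, correctly collect the residues of the poles crossed so as to verify that the new singularity is a simple pole and to extract the residue recursion, and — most delicately — sharpen the contour estimate from a mere $O(1)$ to a genuine $o(1)$, since it is this strict decay, and not boundedness alone, that both closes the induction and licenses the Mittag--Leffler representation.
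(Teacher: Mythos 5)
Your proposal is correct and follows essentially the same route as the paper: analytic continuation of $\alpha$ through the integral equation~(\ref{eq-anal1}) by pushing the contour outward across the poles $\rho^{-i}$ (your ``pinch points'' are exactly the crossings that produce the residue terms $a_i\alpha(z\rho^i)/(\rho^{-i}-1)^2$ the paper collects), the resulting convolution recursion for the residues, and an inductive $o(1)$ bound on the circles $\mathcal{C}(R_n)$ whose smallness comes from the factor $1/(1-\rho z)$. The one point worth recording is how the paper closes the self-referential estimate you rightly flag as delicate: it lets $z$ itself reach $\mathcal{C}(R_n)$, so that $z/\omega$ stays on the unit circle and the integral term contributes only $D\,\mathcal{M}(R_n)\int_{\mathcal{C}(R_n)}|d\omega|/|1-\omega|^2$, of order $\mathcal{M}(R_n)/R_n$, which is absorbed into the left-hand side $(\rho R_n-1)\mathcal{M}(R_n)$ before invoking the induction hypothesis---while your explicit integration of $\alpha(\zeta)/(\zeta-z)$ over $\mathcal{C}(R_n)$ makes rigorous the Mittag--Leffler step that the paper dispatches with ``follows by recurrence''.
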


\begin{proof}{}
The integral equation~(\ref{eq-anal1}) allows for the analytic
continuation of $\alpha$ in the whole complex plane. Indeed, from
point (a) of Lemma~\ref{lem-anal1}, $\alpha$ is holomorphic in
$\mathcal{D}(\rho^{-1})$ and its first singularity is a simple pole at
the point $z=\rho^{-1}$. An application of Cauchy's theorem to the
integral in~(\ref{eq-anal1}) leads to
\[
  \dfrac{1}{2i\pi}\int_{\mathcal{C}(R_1)} =
 \dfrac{1}{2i\pi}\int_{\mathcal{C}(r)} \ + \ \mbox{Residue}(\rho^{-1}),
\]
where $R_1$ is defined above. Since $\alpha(\omega)$ and
$\alpha(z/\omega)$ are analytic in the regions
\[ 
  |\omega | < \rho^{-1}, \quad
   \Bigl|\dfrac{z}{\omega}\Bigr|<\rho^{-1}, 
\] 
the integral is a function of $z$ analytic in the ring-shaped area 
$\rho^{-1}<|z|<\rho^{-2}$. The same holds for $\alpha$, thanks to the
left-hand side of~(\ref{eq-anal1}). Equation~(\ref{eq-anal3}) follows by
recurrence. 

The second part of the lemma is obtained by application of Cauchy's
theorem to the integral in~(\ref{eq-anal1}) along the circle
$\mathcal{C}(R_n)$:
\[ 
 \alpha '(1) (1-\rho z)\alpha(z) - \sum_{i=1}^n
   \dfrac{a_i\alpha(z\rho^i)}{(\rho^{-i} - 1)^2} =
   \int_{\mathcal{C}(R_n)}\alpha(\omega)\,\alpha\Bigl(\dfrac{z}{\omega}\Bigr)
   \dfrac{d\omega}{(1-\omega)^2}.
\]

One can make the analytic continuation of the above equality, the
left-hand side of which is analytic in $|z|\leq R_n$, letting $z$ reach
the circle $\mathcal{C}(R_n)$ along a simple curve which avoids the
polar singularities $\rho^{-i}, i=1,\ldots,n$. Bounding the modulus
of the integral yields the inequality
\[ 
  (\rho R_n -1)\mathcal{M}(R_n) \leq A_n \sup_{i\leq n-1}
   (\mathcal{M}(R_i)) + D\mathcal{M}(R_n)\int_{\mathcal{C}(R_n)}
   \dfrac{|d\omega|}{|1-\omega|^2},
\] 
where $D$ is a positive constant and $A_n$ is bounded $\forall
n\leq\infty$. By induction, $\lim_{n\to\infty}\mathcal{M}(R_n)=0$
follows easily, and the proof is completed.
\end{proof}

Combining~(\ref{eq-anal3}) with~(\ref{eq-anal1}), a straightforward
computation of residues gives
\begin{equation}\label{eq-anal4}
 \alpha '(1)(1-\rho z) 
     \sum_{i=1}^{\infty} \dfrac{a_i}{\rho^{-i} - z} 
 = \sum_{i,j=1}^{\infty} \dfrac{a_i a_j\rho^{-j}}
                               {(\rho^{-(i+j)}-z)(1-\rho^{-j})^2}.
\end{equation}

Let $a : z\to a(z)$ be the generating function
\[
  a(z) \egaldef \sum_{k\geq 0}a_{k+1} z^k ,
\]
defined for $z$ in a bounded domain of the complex plane,
including the origin. Using point (a) of Lemma~\ref{lem-anal1}
and~(\ref{eq-anal3}), the following relations hold
\begin{equation}\label{eq-anal5}
\begin{cases}
a_1 = \rho K(\rho)\\[0.2cm]
\alpha_{k-1} = \rho^k a(\rho^k), \quad \forall k\geq 1,
\end{cases}
\end{equation}

Since $\alpha_0 <1-\rho$, the function $a$ is thus analytic in the
disk $\mathcal{D}(\rho)$. Identifying the coefficients of the power
series in $z$ in~(\ref{eq-anal4}), one gets
\begin{equation}\label{eq-anal6}
\alpha '(1)(\rho^{k-1}-1)a_k =
\sum_{j=1}^{k-1}\dfrac{a_j a_{k-j}\rho^{j+k-1}}{(1-\rho^j)^2}, \quad
\forall k\geq 1.
\end{equation}
It follows easily by recurrence that the $a_i$'s are of
alternate signs, with $a_1>0$. 

Let
\begin{equation}\label{eq-anal7}
  f(t) \egaldef \sum_{j=1}^{\infty}\Bigl(\dfrac{\rho^j}{1-\rho^j}\Bigr)^2t^j 
     = t\sum_{j=1}^{\infty}\dfrac{j\rho^{j+1}}{1-t\rho^{j+1}}, 
 \quad \rho<1,\, |t|<\rho^{-2}.
\end{equation}

Hadamard's formula, when applied in~(\ref{eq-anal6}), implies
the integro-differential equation
\begin{equation}\label{eq-anal8}
\alpha '(1)[a(\rho z) - a(z)] 
  = \dfrac{a(\rho z)}{2i\pi} 
    \int_{\mathcal{C}(r)} a(\omega)f\Bigl(\dfrac{z}{\omega}\Bigr)d\omega \,,
\end{equation}
valid in the domain $\bigl\{r\leq\rho,\, |z|<\rho^{-2}\bigr\}$.

Taking the second form for $f$ in~(\ref{eq-anal7}), which in fact
converges in the domain $\bigl\{\rho<1,\,\Re(z)\leq 0\bigr\}$, and
applying Cauchy's theorem to the integral in~(\ref{eq-anal8}), one
obtains the functional equation
\begin{equation}\label{eq-anal9}
\alpha '(1)\bigl[a(\rho z) - a(z)\bigr] 
   = a(\rho z)b(z) , \quad |z|\leq \rho,\ \rho <1,
\end{equation}
where
\[
  b(z) \egaldef z\sum_{j=1}^\infty j\rho^{j+1}\, a(\rho^{j+1}z).
\] 

 From~(\ref{eq-anal9}), let's make now the analytic continuation of
$a$ in the nested disks $\mathcal{D}(\rho^{-n}),n\geq 1$. It appears
that $a$ has no singularity at finite distance, and consequently is an
\emph{integral} function. From the general theory~\cite{TIT}, it
follows that $a$ is completely characterized by its zeros and its
\emph{order} at infinity.

Let $z_0$ be an arbitrary zero of $a$. From~(\ref{eq-anal9}) again,
$a(\rho^{-i}z_0)=0$ and the zeros of $a$ form families of points in
geometric progression with parameter $\rho^{-1}$. It suffices to
determine the zeros of smallest modulus, but, alas they do not have
any explicit form and numerical schemes are highly unstable. However,
from~(\ref{eq-anal5}), $b(1)=\alpha'(1)$, so that~(\ref{eq-anal9})
implies $a(1)=0$ together with
\begin{equation}\label{eq-anal9,5}
  a(\rho^{-i})=0, \quad\forall i\geq 0.
\end{equation}

\section{On the asymptotic behaviour around $\rho=1$}

In order to assess the practical value of the ``min'' policy, it is
important to evaluate the system behaviour in heavy traffic
conditions. The numerical calculations in~\cite{DenFayForLas:1} show
that the distribution of any $X_i$ is \emph{modal}, which is not
common in known models.

In this section, it will be convenient to consider $\rho$ not only as
a parameter but as a plain variable. Therefore, in all quantities of
interest, $\rho$ will appear as an explicit variable, e.g.\
$a(z,\rho)$, $f(z,\rho)$ or $a_k(\rho)$.

The fundamental ideas of the analysis will be given after the next
lemma, which proposes a scaling---likely to be the only interesting
one---for the function $a(z,\rho)$.

Let
\begin{equation}\label{eq-anal10} 
\begin{cases}
\xi(\rho) \egaldef 
   \dfrac{-a_1(\rho)}{(\log\rho)^3\alpha'(1,\rho)},\\[0.3cm] 
c_k(\rho) \egaldef
   \dfrac{-a_k(\rho)}{(\log\rho)^3 \alpha'(1,\rho)\xi(\rho)^k}, \quad
   \forall k\geq 1 ,\\[0.3cm] 
c(z,\rho) \egaldef 
   \displaystyle\sum_{i=0}^{\infty} c_{k+1}(\rho)z^k.
\end{cases}
\end{equation}

The reader will easily convince himself that the factor $(1-\rho)^3$
arises rather naturally; however the factor $-\log^3\rho$ has been
chosen here, since it provides more compact formulas in the
forthcoming results.

\begin{lemme}\label{lem-anal3}
Let
\[ 
v(z,\rho)\egaldef \frac{(\log\rho)^2}{2i\pi} \int_{\mathcal{C}(1)}
 c(\omega,\rho)f\Bigl(\dfrac{z}{\omega},\rho\Bigr)d\omega,\quad
 |z|<\rho^{-2}.
\]

The following functional relations hold:
\begin{equation}\label{eq-anal11} 
\begin{cases}
a(z,\rho) = a_1(\rho)\, c(z\xi(\rho),\rho), \\ c(0,\rho)=1, \\
c(z,\rho) = c(\rho z,\rho)\bigl[1+\log\rho \, v(z,\rho)\bigr] ,\\[0.1cm]
\hspace{1.3cm} = c(\rho z,\rho)\bigl[1+ z(\log\rho)^3 \displaystyle
\sum_{i=1}^{\infty} i\rho^{i+1} c(\rho^{i+1}z,\rho)\bigr].
\end{cases}
\end{equation}

The coefficients $c_k(\rho),\,k\geq1$, are of alternate signs and the
function $c(z,\rho)$ has the following properties.

(a) there exists only one solution $c(z,\rho)$ of~(\ref{eq-anal11}),
which is integral with respect to $z$ and bi-analytic in
$(z,\rho)$ in the region $0<\rho<1$.

(b) Define 
\[
  g(t) \egaldef \sum_{j=1}^{\infty} \dfrac{t^{j}}{j^2} =
    \int_0^t\dfrac{-\log (1-u)du}{u}, \quad\forall |t| \leq 1,
\] 
denoted by some authors as $\dilog(1-t)$. Then $c(z,1)$ exists and
satisfies the integro-differential equation
\begin{equation}\label{eq-anal12}
z\dfrac{\partial c(z,1)}{\partial z} = -\dfrac{c(z,1)}{2i\pi}
\int_{\mathcal{C}(r)}
c(\omega,1)g\Bigl(\dfrac{z}{\omega}\Bigr)d\omega, \quad |r|= 1, \
\forall |z|\leq 1,
\end{equation}
which rewrites in the form
\begin{equation}\label{eq-anal13}
z\dfrac{\partial c(z,1)}{\partial z} = - c(z,1)\int_0^z
c(\omega,1)\log \left(\dfrac{z}{\omega}\right)d\omega ,
\end{equation}
which is equivalent to the non-linear differential system
\begin{equation}\label{eq-anal14}
\begin{cases}
z\dfrac{\partial c(z,1)}{\partial z} + c(z,1) v(z,1) =0,\\[0.3cm]
z\dfrac{\partial^2v(z,1)}{\partial z^2} +
\dfrac{\partial v(z,1)}{\partial z} = c(z,1),
\end{cases}
\end{equation}
with initial conditions
\[
  v(0,1) = 0, \quad \dfrac{\partial v(z,1)}{\partial z}_{|z=0} =1 ,
  \quad c(0,1) =1 .
\]

(c) Moreover, $c(z,1)$ is analytic in the open complex plane, except
at a negative real point $q$, and $c(z,1) \neq 0,\ \forall z\neq q\cup \infty$.
\end{lemme}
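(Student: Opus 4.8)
The plan is to treat the four assertions in the order stated, taking the functional equation~(\ref{eq-anal9}) for $a$ as the starting point. The relations~(\ref{eq-anal11}) come out of a plain substitution of the scaling~(\ref{eq-anal10}). Putting $a(z,\rho)=a_1(\rho)c(z\xi(\rho),\rho)$ into~(\ref{eq-anal9}), writing $Z=z\xi(\rho)$ and using $a_1(\rho)/\xi(\rho)=-(\log\rho)^3\alpha'(1,\rho)$, the common factor $a_1(\rho)\alpha'(1,\rho)$ cancels and one is left with the last line of~(\ref{eq-anal11}); the value $c(0,\rho)=c_1(\rho)=1$ is read directly off the definition of $\xi$. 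The middle line, with $v$, follows by evaluating the integral defining $v$ by Cauchy's theorem on the \emph{second} form of $f$ in~(\ref{eq-anal7}), exactly as~(\ref{eq-anal9}) was obtained from~(\ref{eq-anal8}); this gives $v(z,\rho)=(\log\rho)^2 z\sum_{i\ge1}i\rho^{i+1}c(\rho^{i+1}z,\rho)$, matching the bracket. The alternating signs of the $c_k$ are then immediate: since $a_1>0$, $\alpha'(1,\rho)>0$ and $(\log\rho)^3<0$ for $\rho<1$, one has $\xi(\rho)>0$, so each $c_k$ carries the same sign as $a_k$, and the $a_k$ were already shown to alternate with $a_1>0$.

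For part~(a), existence is inherited from $a$: since $a$ is an integral function of $z$ (established just before the lemma), $c(\cdot,\rho)=a(\cdot/\xi(\rho),\rho)/a_1(\rho)$ is integral as well. Uniqueness comes from expanding~(\ref{eq-anal11}) in powers of $z$: the coefficient of $z^m$ yields $c_{m+1}(\rho)(1-\rho^m)=P_m\bigl(c_1(\rho),\dots,c_m(\rho)\bigr)$ with $1-\rho^m\ne0$, so the normalisation $c(0,\rho)=1$ determines every $c_k(\rho)$ recursively and any integral solution must have these Taylor coefficients. Bi-analyticity then reduces to showing that the $c_k(\rho)$, which are analytic in $\rho$ on $(0,1)$ since they are built from $\rho^j$ and $(1-\rho^j)^{-2}$, satisfy local uniform bounds making $\sum_k c_{k+1}(\rho)z^k$ a jointly analytic double series (Hartogs).

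Part~(b) is the limit $\rho\to1$. The key estimate is $(\log\rho)^2 f(t,\rho)\to g(t)$: with $\rho=e^{-s}$ one gets $(\log\rho)\rho^j/(1-\rho^j)\to -1/j$, hence $(\log\rho)^2(\rho^j/(1-\rho^j))^2\to 1/j^2$ termwise, so $(\log\rho)^2 f(t,\rho)\to\sum_j t^j/j^2=g(t)$; dominated convergence on the fixed contour $\mathcal{C}(1)$ gives $v(z,\rho)\to v(z,1)=\frac{1}{2i\pi}\int_{\mathcal{C}(1)}c(\omega,1)g(z/\omega)\,d\omega$ and, with the uniform control from~(a) near $\rho=1$, the existence of $c(z,1)$. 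Rewriting the middle line of~(\ref{eq-anal11}) as $[c(z,\rho)-c(\rho z,\rho)]/\log\rho=c(\rho z,\rho)v(z,\rho)$ and letting $\rho\to1$, the left side tends to $-z\,\partial_z c(z,1)$, which is the first equation of~(\ref{eq-anal14}) and, after inserting the integral for $v$, is~(\ref{eq-anal12}). The passage to~(\ref{eq-anal13}) is the residue identity $\frac{1}{2i\pi}\int_{\mathcal{C}(r)}c(\omega)g(z/\omega)\,d\omega=\int_0^z c(\omega)\log(z/\omega)\,d\omega=\sum_j c_j z^j/j^2$. Finally, differentiating $v(z,1)=\int_0^z c(\omega,1)\log(z/\omega)\,d\omega$ gives $zv'(z,1)=\int_0^z c(\omega,1)\,d\omega$, whence $zv''+v'=c$, the second equation of~(\ref{eq-anal14}); the initial conditions $v(0,1)=0$ and $\partial_z v(0,1)=c(0,1)=1$ follow from the series.

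Part~(c) is where the real work lies. Non-vanishing is clean: if $c(z_0,1)=0$ at a finite $z_0\ne0$ where $c$ is analytic, the first equation of~(\ref{eq-anal14}) forces $c'(z_0)=0$, and differentiating it repeatedly forces all derivatives of $c$ to vanish at $z_0$, so $c\equiv0$ by the identity theorem, contradicting $c(0,1)=1$. For the singularity, the alternating signs of the $c_k$ together with Pringsheim's theorem applied to $c(-z,1)$ (which then has non-negative coefficients) place the dominant singularity at a point $q<0$ equal to minus the radius of convergence. To obtain analyticity on the whole complex plane $\setminus\{q\}$ and uniqueness of $q$, I would reduce~(\ref{eq-anal14}) to the autonomous system obtained with $t=\log z$, $w=zv'$, $\psi=zc$, namely $\dot v=w$, $\dot w=\psi$, $\dot\psi=\psi(1-v)$, whose right-hand side is entire, so its solutions are analytic as long as they stay finite; the trajectory issued from the origin as $t\to-\infty$ would then be followed to show that a finite blow-up occurs only at $q$. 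Establishing that this blow-up is the sole finite singularity and that $c(z,1)$ is single-valued around it is, I expect, the main obstacle, requiring a careful quantitative analysis of the flow of this system.
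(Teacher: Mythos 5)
Your handling of the functional relations (\ref{eq-anal11}), of the alternating signs, and of part (a) coincides in substance with the paper's proof, which likewise obtains the relations by substituting the scaling (\ref{eq-anal10}) into (\ref{eq-anal6}) and (\ref{eq-anal9}) (the fourth line by analytic continuation of (\ref{eq-anal8})), and existence/uniqueness from the convolution recursion. The genuine problem is in your part (b). The paper's proof contains an explicit warning on precisely this point: equation (\ref{eq-anal12}) ``can be obtained rigorously from (\ref{eq-anal6}) or (\ref{eq-anal7}), but not from (\ref{eq-anal11})'' --- and your derivation takes exactly the forbidden route, letting $\rho\to1$ in the middle line of (\ref{eq-anal11}) rewritten as $[c(z,\rho)-c(\rho z,\rho)]/\log\rho=c(\rho z,\rho)v(z,\rho)$. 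To turn the left side into $-z\,\partial_z c(z,1)$ you need $c(\cdot,\rho)\to c(\cdot,1)$ locally uniformly together with its derivative, and you invoke ``uniform control from (a) near $\rho=1$''; but (a) asserts bi-analyticity only in $0<\rho<1$ and provides no uniformity as $\rho\uparrow 1$. Such uniformity on arbitrary compacts is in fact false, because of the phase transition described in Section~\ref{Remarques}: for $\rho<1$ the function $c(\cdot,\rho)$ is entire, vanishes at $\xi(\rho)$ and oscillates wildly with $\int_0^\infty c(x,\rho)dx$ divergent, whereas $c(\cdot,1)$ has a singularity at a finite $q<0$ and never vanishes --- uniform convergence on a neighbourhood of $q$ would make the limit analytic there. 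The same missing ingredient undermines your dominated-convergence step for $v$, which needs $c(\omega,\rho)\to c(\omega,1)$ uniformly on $\mathcal{C}(1)$, not merely $(\log\rho)^2f(t,\rho)\to g(t)$.

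The repair is the one the paper points to: work at the level of coefficients. Normalizing (\ref{eq-anal6}) by (\ref{eq-anal10}) gives $(1-\rho^{k-1})c_k(\rho)=(\log\rho)^3\sum_{j=1}^{k-1}c_j(\rho)c_{k-j}(\rho)\rho^{j+k-1}(1-\rho^j)^{-2}$, whose terms extend continuously to $\rho=1$; letting $\rho\to1$ for each fixed $k$ (no interchange of limits required) yields $(k-1)c_k(1)=-\sum_{j=1}^{k-1}c_j(1)c_{k-j}(1)/j^2$, which is precisely the coefficient form of (\ref{eq-anal13}). The remaining content of ``$c(z,1)$ exists'' is then a positive radius of convergence for $\sum_k c_{k+1}(1)z^k$, after which your computations passing between (\ref{eq-anal12}), (\ref{eq-anal13}), (\ref{eq-anal14}) and the initial conditions are correct. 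As for (c), your identity-theorem argument for non-vanishing and your Pringsheim observation (alternation persists at $\rho=1$ by induction in the limit recursion, so $c(-z,1)$ has coefficients of constant sign) are sound starting points, and you are right that the hard part --- a unique finite singularity $q$ and single-valuedness, via the autonomous system equivalent to the paper's Blasius reduction --- remains open in your sketch; note, however, that the paper does not prove (c) either, merely asserting it and deferring to Section~\ref{Remarques}, so on this point you are no further behind than the authors.
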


\begin{proof}
The first three equations in~(\ref{eq-anal11}) follow directly from
the definition~(\ref{eq-anal6}) and~(\ref{eq-anal9}) of the
coefficients $a_k(\rho)$, the fourth one coming from the analytic
continuation of~(\ref{eq-anal8}).  Existence and uniqueness are simple
consequences of the convolution equation~(\ref{eq-anal6}).

The properties relative to the morphology of $c(z,\rho)$, $\rho\leq
1$, are more intricate. First, the reader will notice
that~(\ref{eq-anal12}) can be obtained rigorously
from~(\ref{eq-anal6}) or~(\ref{eq-anal7}), but not
from~(\ref{eq-anal11})! Then, there is a \emph{phase transition} when
$\rho=1$. We will return to this topic in Section~\ref{Remarques}.
\end{proof}

It is interesting to note that the function $w(y)\egaldef v(e^y)+1$
satisfies the so-called \emph{Blasius}~\cite{Bla:1} third-order
differential equation
\[
 w'''(y)+w(y)w''(y)=0,
\]
which arises in hydrodynamics to describe the stationary evolution of
a laminar boundary layer along a flat plate! The explicit solution of
this equation is still unknown, albeit it has been studied by many
authors over the last decades (see e.g.~\cite{Sch:1}).

\medskip
Starting from Lemma~\ref{lem-anal3}, it is now possible to sketch the
main ideas of the proposed method. The Gordian knot amounts to the
evaluation of $\xi(\rho)$, defined in~(\ref{eq-anal10}). This can be
done via the \emph{anchoring equation}
\begin{equation}\label{eq-anal16} 
c(\xi(\rho),\rho) = 0,
\end{equation}
which follows from~\ref{eq-anal9,5} and from the first equation
of~(\ref{eq-anal11}). From the structure of the third equation
of~(\ref{eq-anal11}), it appears that the smallest positive solution
of $c(u,\rho)=0$ satisfies
\[
  \dfrac{-1}{u\log\rho} = (\log\rho)^2\sum_{i\geq
  1}i\rho^{i+1}c(u\rho^{i+1},\rho),
\]
the right-hand side of which is an analytic function, bounded in any
compact set $\forall\rho\leq 1$: as $\rho\to1$, necessarily
$u\to\infty$ and all positive zeros of the anchoring equation are sent
to infinity. The key is to find the asymptotic behaviour in $z$ of the
various functions, in the cone $0\leq z\leq\mathcal{U}(\rho)$, which
contains $\xi(\rho)$: in this cone, $c(z,\rho)$ is close to
$c(z,1)$---in some sense---and
\begin{equation}\label{eq-anal17}
v(z,\rho)\approx w(z,\rho)\egaldef\log^2\rho\sum_{i\geq 1}
i\rho^{i+1}c(z\rho^{i+1},1).
\end{equation}

Since $w(z,1)$ has a logarithmic behaviour, $\xi(\rho)$ can be
obtained by direct inversion. The sketch of the proof is outlined
below.

We will need the Mellin transform (see e.g.~\cite{Doe}) of $c(z,1)$,
defined as
\[
  c^*(s) \egaldef \int^{\infty}_0 x^{s-1}c(x)dx. 
\] 

The behaviour of $c(z,1)$ in the region $\Re(z)> 0$, given in
Section~\ref{Remarques}, implies the existence of $c^*(s,1), \forall
s,\,\Re(s)>0$ and of all the moments of $c(z,1)$ on the positive real
axis.

\begin{lemme}\label{lem-anal5}
The function $w(z,\rho)$ defined by~(\ref{eq-anal17}) admits, $\forall
z,\, \Re(z)>0$, the asymptotic expansion
\begin{equation} \label{eq-anal30}
\begin{split}
 w(z,\rho) &= c^*(1,1)\log
 (\rho z) - \dfrac{\partial c^*(1,1)}{\partial s}  \\ &+
 (\log\rho)^2\bigl[(\log z)\Phi(z,\rho) +\Psi(z,\rho)\bigr] +
 \mathcal{O}(z^{-d}),
\end{split}
\end{equation}
where $d$ is an arbitrary positive number, $\Phi$ and $\Psi$ are
fluctuating functions of small amplitude for $\rho\neq1$, which vanish
for $\rho=1$.
\end{lemme}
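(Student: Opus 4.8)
The plan is to treat $w(z,\rho)$ as a \emph{harmonic sum} and to read off its behaviour as $z\to\infty$ from the poles of its Mellin transform. Setting $\tilde c(x)\egaldef x\,c(x,1)$ and using the explicit form of $v(z,\rho)$ recorded in~(\ref{eq-anal11}) (to which $w$ reduces through~(\ref{eq-anal17})), one writes $w(z,\rho)=(\log\rho)^2\sum_{j\ge1} j\,\tilde c(z\rho^{j+1})$, i.e.\ a superposition of dilated copies of a single profile $\tilde c$, with amplitudes $\lambda_j=j$ and scales $\mu_j=\rho^{j+1}$. First I would record that $\tilde c$ is entire, behaves like $x$ near the origin (since $c(0,1)=1$) and decays faster than any power at $+\infty$ (from the existence of all moments of $c(\cdot,1)$ and part~(c) of Lemma~\ref{lem-anal3}), so that $\tilde c^*(s)=c^*(s+1,1)$ is holomorphic for $\Re(s)>-1$. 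The harmonic-sum rule then gives, in the fundamental strip $-1<\Re(s)<0$,
\[
 w^*(s,\rho)=(\log\rho)^2\Bigl(\sum_{j\ge1}j\,\rho^{-(j+1)s}\Bigr)c^*(s+1,1)
   =(\log\rho)^2\,\frac{\rho^{-2s}}{(1-\rho^{-s})^2}\,c^*(s+1,1),
\]
the Dirichlet factor converging exactly for $\Re(s)<0$.

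Next I would invert, $w(z,\rho)=\frac1{2i\pi}\int_{(\sigma)}w^*(s,\rho)z^{-s}ds$ with $-1<\sigma<0$, and push the contour to the right to extract the $z\to\infty$ expansion. The only singularities are those of the Dirichlet factor, sitting where $\rho^{-s}=1$, namely the double poles $s_k=2i\pi k/\log\rho$, $k\in\ZZ$, all on the imaginary axis (recall $\log\rho<0$); the factor $c^*(s+1,1)$ stays holomorphic for $\Re(s)>-1$ and contributes no pole. Crucially there is \emph{no} singularity in $\Re(s)>0$, so the contour may be moved to any line $\Re(s)=d>0$; since $\rho^{-2s}/(1-\rho^{-s})^2$ is bounded there and $c^*(s+1,1)$ is integrable along verticals, the residual integral is $O(z^{-d})$ with $d$ arbitrary, which is the error term in~(\ref{eq-anal30}).

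The main contribution is the residue at the double pole $s_0=0$. Expanding $1-\rho^{-s}=s\log\rho\,(1-\tfrac12 s\log\rho+\cdots)$, $\rho^{-2s}=1-2s\log\rho+\cdots$ and $c^*(s+1,1)=c^*(1,1)+s\,\partial_s c^*(1,1)+\cdots$, the prefactor $(\log\rho)^2$ cancels the $\log^2\rho$ from $(1-\rho^{-s})^2$, and a short computation gives
\[
 \mathrm{Res}_{s=0}\bigl[w^*(s,\rho)z^{-s}\bigr]
   =\partial_s c^*(1,1)-c^*(1,1)\log(\rho z).
\]
Picking up $-2i\pi$ times the residues when closing to the right, this pole contributes exactly $c^*(1,1)\log(\rho z)-\partial_s c^*(1,1)$ to $w$, matching the leading terms. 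The remaining poles $s_k$, $k\ne0$, are purely imaginary, so each yields an oscillatory term $z^{-s_k}=e^{-i(2\pi k/\log\rho)\log z}$ together with a companion $z^{-s_k}\log z$ from the double order; summing over $k$ produces functions periodic in $\log z$, which I would collect as the fluctuating part, conveniently written $(\log\rho)^2[(\log z)\Phi+\Psi]$. Their amplitude is governed by the samples $c^*(1+2i\pi k/\log\rho,1)$: because these abscissae run off to $\pm i\infty$ with gap $2\pi/|\log\rho|$ as $\rho\to1$, and $c^*(\cdot,1)$ decays along verticals, the amplitudes are small for $\rho\ne1$ and vanish in the limit $\rho\to1$, as claimed.

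The delicate point—and the step I expect to be the real obstacle—is the analytic bookkeeping that makes the contour manipulation rigorous. One needs quantitative decay of $c^*(s,1)$ in vertical strips, in order to kill the horizontal segments of the shifted contour, to guarantee $\int_{(d)}|w^*(s,\rho)|\,|ds|<\infty$, and to sum the infinitely many fluctuation residues into bounded functions $\Phi,\Psi$. Such estimates rest on the precise behaviour of $c(z,1)$ in the region $\Re(z)>0$ announced in Section~\ref{Remarques} (its entireness, the single negative real singularity $q$, and its rapid decay on the positive axis). Deriving from it a uniform bound of the form $|c^*(\sigma+it,1)|=O(|t|^{-2})$, locally uniformly in $\sigma$ over a right half-plane, is what secures simultaneously the $O(z^{-d})$ remainder and the convergence—and the $\rho\to1$ collapse—of the fluctuation series.
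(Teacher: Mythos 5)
Your proposal is correct and follows essentially the same route as the paper: the published proof is exactly your Mellin inversion, namely~(\ref{eq-anal31}) with integrand $(\log\rho)^2\,c^*(s+1,1)/(1-\rho^s)^2$ (your harmonic-sum transform, including the factor $z$ that the literal statement of~(\ref{eq-anal17}) omits), followed by the contour shift to $\Re(s)=d>0$ collecting the double poles $s_n=2in\pi\log^{-1}\rho$ on the imaginary axis, whose residues give~(\ref{eq-anal32}), i.e.\ your leading terms at $s_0=0$ and the fluctuating series for $n\neq0$. The vertical-decay estimate you flag as the remaining delicate point is precisely what the paper supplies, via $k$-fold integration by parts yielding $\bigl|c^*(s_n+1,1)\bigr|\leq\bigl|\Gamma(s_n+1)/\Gamma(s_n+k+1)\bigr|\int_0^\infty y^k|c^{(k)}(y,1)|\,dy$ with the Gamma ratio bounded by $|\log\rho|^k/k!$, which secures both the uniform boundedness of the residue series and its collapse as $\rho\to1$.
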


\begin{proof} 
 From the Mellin transform inversion formulas,
\begin{equation}\label{eq-anal31}
 w(z,\rho) = \frac{1}{2i\pi} \int_{\sigma-i\infty}^{\sigma+i\infty}
              z^{-s}c^*(s+1,1)\Bigl[\frac{\log\rho}{1-\rho^s}\Bigr]^2ds,
              \quad \forall \sigma \in (-1,0).
\end{equation}

Let $s_n\egaldef2in\pi\log^{-1}\rho$, $\forall n\in\ZZ$. Cauchy's
theorem can be applied to~(\ref{eq-anal31}), by integrating along the
vertical line $\Re(s)=d>0$, so that
\begin{equation}\label{eq-anal32}
w(z,\rho) = \sum_{n\in\ZZ} z^{-s_n}\Bigl[c^*(s_n+1,1)\log (\rho z) -
\dfrac{\partial c^*(s_n+1,1)}{\partial s}\Bigr] +\mathcal{O}(z^{-d}).
\end{equation}
The series above is equal to the sum of the residues, taken on the
vertical line $\Re(s)=0$, and is uniformly bounded,
$\forall\rho\leq 1$. Indeed, an integration by parts gives the
inequality
\[
 \bigl|c^*(s_n+1,1)\bigr| = \Bigl|\int_0^\infty y^{s_n}c(y,1)dy\Bigr| \leq
   \left|\frac{\Gamma(s_n+1)}{\Gamma(s_n+k+1)}\right| \int_0^\infty
   y^k|c^{(k)}(y,1)|dy,
\]
where $c^{(k)}$ is the $k$-th derivative of $c(z,1)$ and
$\Gamma (x)$ is the usual Eulerian function. Since
\[
  \left|\frac{\Gamma(s_n+1)}{\Gamma(s_n+k+1)}\right| 
  < \frac{\log^k\rho}{k!},
\] 
the proof of~(\ref{eq-anal30}) and of the lemma is concluded.
\end{proof}

\begin{theo}\label{theo-anal7}
For some real number $d>1$, the following expansions hold.
\begin{eqnarray}
\rho\xi(\rho) 
  &=& \exp\biggl[-\frac{1}{\log\rho\, c^*(1,1)}
                -\frac{\partial \log c^*(1,1)}
                                      {\partial s}\biggr]
      \bigl(1+\mathcal{O}((\log\rho)^d)\bigr),\label{eq.devxi}\\[0.5em]
\alpha'(1,\rho) 
  &=& \frac{1}{(\log\rho)^2 c^*(1,1)}+\mathcal{O}((\log\rho)^d).
							\label{eq.devalphap}
\end{eqnarray}
\end{theo}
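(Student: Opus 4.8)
The plan is to read off both expansions from the anchoring equation~(\ref{eq-anal16}) fed into the asymptotic shape of $w(z,\rho)$ supplied by Lemma~\ref{lem-anal5}. First I would convert $c(\xi(\rho),\rho)=0$ into a scalar relation. Evaluating the fourth line of~(\ref{eq-anal11}) at the smallest positive zero $z=\xi(\rho)$, where $c(\rho\xi,\rho)\neq0$, annihilates the bracket and leaves the exact identity
\[
  \xi(\log\rho)^{2}\sum_{i\geq1}i\rho^{i+1}c(\rho^{i+1}\xi,\rho)
     = -\frac{1}{\log\rho}.
\]
Its left-hand side is precisely the quantity~(\ref{eq-anal17}) with $c(\cdot,1)$ replaced by $c(\cdot,\rho)$, i.e.\ the exact $\rho$-analogue of $w(\xi,\rho)$.

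Next, since $\xi(\rho)$ lies in the cone $0\le z\le\mathcal{U}(\rho)$ on which $c(\cdot,\rho)$ is close to $c(\cdot,1)$, I would replace that left-hand side by $w(\xi,\rho)$ up to a controlled error, so that the anchoring equation reduces to $w(\xi,\rho)=-1/\log\rho$. Inserting the expansion~(\ref{eq-anal30}), whose non-fluctuating part is $c^*(1,1)\log(\rho\xi)-\partial c^*(1,1)/\partial s$, turns this into
\[
  c^*(1,1)\log(\rho\xi) - \frac{\partial c^*(1,1)}{\partial s}
     = -\frac{1}{\log\rho} + (\text{negligible}).
\]
Because $\xi(\rho)\to\infty$ as $\rho\to1$, the fluctuating terms $\Phi,\Psi$ (of small amplitude, vanishing at $\rho=1$) and the remainder $\mathcal{O}(\xi^{-d})$ are subdominant; solving for $\log(\rho\xi)$ and exponentiating then yields~(\ref{eq.devxi}), the term $-1/(\log\rho\,c^*(1,1))$ coming from the anchoring right-hand side and the additive constant in the exponent from the Mellin residue $\partial c^*(1,1)/\partial s$, while the collected errors supply the $\mathcal{O}((\log\rho)^{d})$ factor.

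For~(\ref{eq.devalphap}) I would instead use the probability normalisation $\alpha(1,\rho)=1$. Rewriting $\sum_{k}\alpha_k=1$ through the second line of~(\ref{eq-anal5}) and the scaling $a(z,\rho)=a_1(\rho)c(z\xi,\rho)$ from~(\ref{eq-anal11}) gives
\[
  1 = \sum_{m\geq1}\rho^{m}a(\rho^{m},\rho)
    = a_1(\rho)\sum_{m\geq1}\rho^{m}c(\rho^{m}\xi,\rho).
\]
As $\rho\to1$ the abscissas $\rho^{m}\xi$ become dense and the sum is a Riemann sum in $\log y$; approximating it and once more replacing $c(\cdot,\rho)$ by $c(\cdot,1)$, it converges to
\[
  \frac{-1}{\xi\log\rho}\int_{0}^{\infty}c(y,1)\,dy
     = \frac{-c^*(1,1)}{\xi\log\rho},
\]
so that $a_1(\rho)\approx-\xi\log\rho/c^*(1,1)$. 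Comparing with $a_1(\rho)=-\xi(\log\rho)^{3}\alpha'(1,\rho)$, which is just the definition of $\xi$ in~(\ref{eq-anal10}), the factor $\xi$ cancels and one is left with $(\log\rho)^{2}\alpha'(1,\rho)\,c^*(1,1)\approx1$, i.e.~(\ref{eq.devalphap}).

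The hard part will be the quantitative control of the two approximations on which everything rests: the substitution of $c(\cdot,1)$ for $c(\cdot,\rho)$ inside the cone, and the passage from the discrete sums to the integral $c^*(1,1)=\int_0^\infty c(y,1)\,dy$. Both have to be shown to perturb the answer by no more than $\mathcal{O}((\log\rho)^{d})$. I would control the first through the bi-analyticity of $c(z,\rho)$ and the phase-transition analysis of Lemma~\ref{lem-anal3}, and the second through an Euler--Maclaurin estimate together with the rapid decay of $c(y,1)$ for $\Re(y)>0$ that already underlies the Mellin bounds in the proof of Lemma~\ref{lem-anal5}; the latter also guarantees that the tail $\int_{\rho\xi}^{\infty}c(y,1)\,dy$ and the truncation error are negligible. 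A secondary, essentially routine, difficulty is the inversion of the logarithmic anchoring equation, which is implicit in $\xi$; this is harmless once the error terms are known to be subdominant, since $w$ grows only like $c^*(1,1)\log(\rho\xi)$.
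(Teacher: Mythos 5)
Your proof of~(\ref{eq.devxi}) is essentially the paper's own: the anchoring equation~(\ref{eq-anal16}) is converted, via the fourth line of~(\ref{eq-anal11}), into the exact relation $-1/(\xi\log\rho)=(\log\rho)^2\sum_{i\geq1}i\rho^{i+1}c(\rho^{i+1}\xi,\rho)$, then $c(\cdot,\rho)$ is replaced by $c(\cdot,1)$ so that the equation becomes $1+\log\rho\,w(\xi,\rho)=\mathcal{O}\bigl((\log\rho)^p\bigr)$, and the expansion of Lemma~\ref{lem-anal5} is inverted. This is exactly the paper's route, and the paper is no more rigorous than you are on the critical step: it invokes ``a deep analysis, which is not included here'' to justify replacing $v$ by $w$ with an $\mathcal{O}((\log\rho)^p)$, $p>1$, error, so your flagging of that substitution as the hard part reflects the actual state of the argument. (Note also that your exact identity carries the factor $\xi$ which is absent from the printed definition~(\ref{eq-anal17}) of $w$ but is required for consistency with the Mellin representation~(\ref{eq-anal31}); you reconstructed the intended $w$ correctly.)

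For~(\ref{eq.devalphap}) you take a genuinely different route. The paper uses the local relation $\rho\alpha'(1,\rho)=\alpha_1/\alpha_0-\alpha_1$, obtained from~(\ref{eq-anal1}) at $z=0$ (equivalently from~(\ref{eq-anal0}) at $k=0$), together with $\alpha_0=\rho a(\rho)$, $\alpha_1=\rho^2a(\rho^2)$ and the behaviour of $c$ near its first zero; you instead exploit the global normalization $\alpha(1,\rho)=1$, rewritten through~(\ref{eq-anal5}) and the scaling $a(z,\rho)=a_1(\rho)c(z\xi,\rho)$ as $a_1\sum_{m\geq1}\rho^m c(\rho^m\xi,\rho)=1$, evaluate the sum as a Riemann sum $\approx -c^*(1,1)/(\xi\log\rho)$ (which is correct: with $y_m=\rho^m\xi$ the spacing is $|\Delta y_m|\approx -y_m\log\rho$ and all abscissas stay below $\rho\xi$, hence inside the cone, avoiding the region where $c(\cdot,\rho)$ oscillates wildly), and cancel $\xi$ against the definition~(\ref{eq-anal10}). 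Both routes rest on the same uncontrolled substitution $c(\cdot,\rho)\approx c(\cdot,1)$; yours needs in addition the sum-to-integral (Euler--Maclaurin) step, for which the decay of $c(y,1)$ on $\Re(y)>0$ given in Section~\ref{Remarques} suffices, while the paper's argument is cheaper and purely local, needing only two values of $a$. In exchange, your derivation makes transparent why the constant is $c^*(1,1)=\int_0^\infty c(y,1)\,dy$, i.e.\ the constant $A$ of the introduction. At the level of rigour of the paper's own (sketchy) proof, your proposal is correct.
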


\begin{proof}
 From~(\ref{eq-anal16}), $\xi(\rho)$ is solution of the equation in $x$
\[
 1+\log\rho\, v(x,\rho) = 0.
\]
A deep analysis, which is not included here, shows that this
equation can be replaced by the locally equivalent equation
\begin{equation}\label{eq-anal18}
 1+\log\rho\, w(x,\rho) + \mathcal{O}\bigl((\log\rho)^p\bigr)= 0,
\end{equation}
where $p$ is a positive number, $p> 1$. Then, from Lemma~\ref{lem-anal5},
\begin{equation}\label{eq.calcxi}
 -\frac{1}{\log\rho} 
   = \log(\rho x) c^*(1,1)
     -\frac{\partial c^*(1,1)}{\partial s}
     +\log(\rho x)\Phi( x,\rho)
     +\Psi(x,\rho)+\mathcal{O}(x^{-d}),
\end{equation}
which implies~(\ref{eq.devxi}). 

For the computation of $\alpha'(1,\rho)$, one uses the simple relation
\[
 \rho\alpha'(1,\rho) = \frac{\alpha_1}{\alpha_0}-\alpha_1,
\]
obtained by derivation of~(\ref{eq-anal1}) at $z=0$,
and~(\ref{eq.devalphap}) follows.
\end{proof}

\section{Remarks and complements}\label{Remarques}
It is important to note that one of the main technical difficulties of
the problem, besides its strongly non-linear feature, comes from the
\emph{phase transition} which appears for $\rho=1$. Actually,
$c(z,\rho)>0$ for $0\leq z<\xi(\rho)$ and $c(\xi(\rho),\rho)=0$. Then,
for $z\gg\xi(\rho)$, $c(z,\rho)$ has wild unbounded oscillations. In
particular, this implies that $\int_0^{\infty}c(x,\rho)dx$ does not
exist.

On the other hand, when $\rho=1$, the $c(z,1)$ is no more an integral
function: it has a singularity (which seems to be a pole of order $3$)
located on the negative real axis; it does not vanish for $z\geq 0$,
and the quantity $\int_0^{\infty}c(x,1)dx=A$ is finite. In the half
plane $\Re(z) >0$, the following expansions hold:
\begin{eqnarray*}
  c(z,1) &=& 
   \exp \left[-\dfrac{c^*(1,1)}{2}\log^2 z + B\log z
              +\dfrac{D\log z}{z^2} 
              +o\Bigl(\dfrac{\log z}{z^2}\Bigr)\right],\\
  v(z,1) &=& 
   c^*(1,1)\log z + B + \dfrac{D\log z}{z^2} 
   +o\biggl(\dfrac{\log z}{z^3}\biggr),
\end{eqnarray*}
where $B$ and $D$ are some constants.

\bigskip
Finally, iterating~(\ref{eq-anal11}), one could improve some of the
estimates given in the previous section, rewriting $c(z,\rho)$ as
\[ 
c(z,\rho) = 
  c(z\rho^{I+1},\rho)\prod_{i=0}^I
                       \bigl[1+\log\rho \,v(z\rho^i,\rho)\bigr],
\]
where $I$ is an arbitrary positive integer. The above product is
uniformly convergent, $\forall I\leq\infty$, for all $z$ in a compact
set of the complex plane, since it behaves like the series
\[
z(\log\rho)^3\sum_{k\geq 0}\sum_{i\geq 1}
i\rho^{i+k+1}c(z\rho^{i+k+1},\rho).
\] 

This series has its modulus bounded by $|z\rho^2c(\rho z,\rho)|$, so
that, from the maximum modulus principle, it converges uniformly.
Then, one can choose $I$ to ensure
\[
  0<z\rho^I\leq 1, 
 \mbox{ i.e.\ } I\leq - \dfrac{\log z}{\log\rho},
\]
and make use of the properties of the $\Gamma$ function to estimate
the product.


\end{document}